\newcommand{\argmin}{\operatornamewithlimits{arg\,min}}
\newcommand{\norm}[1]{\left\| #1\right\|}
\theoremstyle{plain}
\newtheorem{thm}{Theorem}[section]
\newtheorem{prop}{Proposition}[section]
\newtheorem{lem}{Lemma}[section]
\theoremstyle{definition}
\newtheorem{dfn}{Definition}[section]
\newtheorem{rmk}{Remark}[section]
\newtheorem{exmp}{Example}[section]
\author{
	Clemens Kirisits\footnote{Johann Radon Institute for Computational and Applied Mathematics (RICAM),
Austrian Academy of Sciences, Linz, Austria},
	Otmar Scherzer\footnote{Computational Science Center, University of Vienna, and Johann Radon Institute for Computational and Applied Mathematics (RICAM), Austrian Academy of Sciences, Linz, Austria}}
\date{\today}
\title{Convergence rates for regularization functionals with polyconvex integrands}
\begin{document}
\maketitle

\begin{abstract}
Convergence rates results for variational regularization methods typically assume the regularization functional to be convex. While this assumption is natural for scalar-valued functions, it can be unnecessarily strong for vector-valued ones. In this paper we focus on regularization functionals with polyconvex integrands. Even though such functionals are nonconvex in general, it is possible to derive linear convergence rates with respect to a generalized Bregman distance, an idea introduced by Grasmair in 2010. As a case example we consider the image registration problem.

\end{abstract}

\section{Introduction}
In this paper we consider solving ill-posed operator equations of the form 
\begin{equation}\label{eq:operator}
	K(u) = v,
\end{equation}
using Tikhonov-type regularization, which consists in approximation of a solution of \eqref{eq:operator} by the minimizer of the functional 
\begin{equation}
 \label{eq:Tik_reg}
 \norm{K(u) - v}^2 + \alpha \mathcal{R}(u).
\end{equation}
Regularization theory is well-established when $\mathcal{R}$ is convex, and in particular when $\mathcal{R}(u)=\frac{1}{2} \norm{u-u_0}^2$. See \cite{EngHanNeu96,Gro93,TikArs77,SchGraGroHalLen09,SchuKalHofKaz12,TikGonSteYag95,TikLeoYag98} for instance. Convergence rates results have been developed in \cite{Fle10,FleHof10,FleHofMat11,HofKalPoeSch07,SchGraGroHalLen09,SchuKalHofKaz12} among others. For nonconvex regularization functionals $\mathcal{R}$, however, only few results are available in the literature \cite{BreLor09,Gra10b,Zar09}.

If the sought-for solution $u$ is scalar-valued, then convexity of $\mathcal{R}$ is a natural condition, because it is closely linked to weak lower semicontinuity of $\mathcal{R}$. Yet if $u:\Omega \subset \mathbb{R}^n \to \mathbb{R}^N$ is a \emph{vector-valued} function, then properties strictly weaker than convexity are enough to ensure weak lower semicontinuity. On the other hand, using a nonconvex $\mathcal{R}$ raises the question of how to obtain convergence rates, since the most common approach involves Bregman distances, which in turn require $\mathcal{R}$ to be subdifferentiable. The aim of this article is to develop convergence rates results for regularization functionals with polyconvex integrands.

A function $f: \mathbb{R}^{N\times n} \to \mathbb{R}$ is polyconvex, if $f(A)$ can be written as a convex 
function of all subdeterminants of $A$. John Ball introduced this notion in the context of nonlinear elasticity, 
where convex stored energy functions are known to be too restrictive physically \cite{Bal77}. However, what 
lends importance to polyconvex functions even outside the field of elasticity is the fact that they render
functionals of the form
\begin{equation}\label{eq:R(u)}
	\mathcal{R}(u)=\int_\Omega f(x,u(x),\nabla u (x))\, dx
\end{equation}
weakly lower semicontinuous in $W^{1,p}(\Omega,\mathbb{R}^N)$.

Recently, the merits of polyconvex functions have been exploited in the field of image processing, in particular for image registration 
\cite{BurModRut13,DroRum04,IglRumSch15_report}. Practical applications of registration models are numerous, one of the most prominent 
being medical imaging \cite{FisMod08,SotDavPar13}.
Registering two given images $I_1,I_2: \Omega\to\mathbb{R}$ means finding a deformation $u:\Omega \to \mathbb{R}^n$ 
such that 
\begin{equation}\label{eq:i1i2} 
	I_1 \circ u = I_2.
\end{equation}
The ill-posedness of this problem is typically overcome via variational regularization, that is, by minimizing a functional of the form
$$\mathcal{S}(I_1\circ u, I_2) + \mathcal{R}(u),$$
where $\mathcal{S}$ measures the similarity between $I_1\circ u$ and $I_2$. A regularization functional $\mathcal{R}$ with a polyconvex integrand can be a reasonable choice, if one models $I_1$ and $I_2$ as hyperelastic materials. However, in this case standard convergence rates results from regularization theory do not apply \cite{SchGraGroHalLen09,SchuKalHofKaz12}. The aim of this paper is to address this issue.

\paragraph{Outline.} The next section (Sec.\ \ref{sec:prelim}) introduces the most important concepts and fixes some notation. It consists of three parts. In the first part, Section \ref{sec:Bregman}, we introduce (generalized) Bregman distances. In Section \ref{sec:regularization}, we review standard results on convergence rates for variational regularization of inverse problems in a Banach space setting. Section \ref{sec:polyconvex}, briefly discusses polyconvex functions and their properties. Section \ref{sec:registration} considers the image registration problem with polyconvex regularization from an inverse problems point of view. It also contains a specific example where in spite of nonconvex regularization the standard convergence rates result as stated in Sec.~\ref{sec:regularization} applies. Finally, in Section \ref{sec:main} we define $W_{\!\mathrm{poly}}$-Bregman distances for functionals with polyconvex integrands and state the corresponding convergence rates result.

\section{Preliminiaries} \label{sec:prelim}

\subsection{Bregman distances} \label{sec:Bregman}
In this article $U$ always denotes a Banach space with dual $U^*$. The dual pairing between $u\in U$ and $u^* \in U^*$ is denoted by $\langle u^*,u\rangle_{U^*,U}$. There are two notable special cases. If $U = U^* = \mathbb{R}^{N\times n}$, we write $u \cdot u^* = \sum_{i=1}^N \sum_{j=1}^n u^*_{ij}u_{ij}$. In the case of Lebesgue spaces (of possibly matrix-valued functions), we use dual brackets without subscripts and write
$$ \langle u^*,u\rangle  = \int u^*(x) \cdot u(x) \, dx.$$
Let $\Omega \subset \mathbb{R}^n$ be an open set. If $U=W^{1,p}(\Omega, \mathbb{R}^N)$, then every element $u^*$ of $U^*$ can be identified with a pair $(u^*_0,u^*_1)\in L^{p^*}(\Omega,\mathbb{R}^N \times \mathbb{R}^{N\times n})$ acting on $u\in U$ as
\begin{equation*}
	\langle u^*,u \rangle_{U^*,U} = \langle u^*_0,u \rangle + \langle u^*_1, \nabla u \rangle.
\end{equation*}

Let $\mathcal{R}$ be a function defined on $U$ taking values in the extended reals $\mathbb{R}\cup \{\pm\infty\}$. Its \emph{effective domain} $\mathrm{dom} \, \mathcal{R}$ is the set $\{u\in U : \mathcal{R}(u) < +\infty\}$. The \emph{subdifferential} of $\mathcal{R}$ at $u\in U$ is defined as
\begin{equation*}
	\partial \mathcal{R}(u) =
	\begin{cases}
		\{u^* \in U^* : \mathcal{R}(v) \ge \mathcal{R}(u) + \langle u^*, v-u \rangle_{U^*,U} \text{ for all } v \in U\}, & \mathcal{R}(u) \in \mathbb{R} \\
		\emptyset, & \mathcal{R}(u) \notin \mathbb{R}.
	\end{cases}
\end{equation*}
Note that we have not assumed $\mathcal{R}$ to be convex. If $\partial \mathcal{R}(u)\neq \emptyset$, then $\mathcal{R}$ is said to be \emph{subdifferentiable} at $u$ and elements $u^*\in \partial \mathcal{R}(u)$ are called \emph{subgradients}. Recall Fermat's rule: A proper function $\mathcal{R}$ attains its minimum at $u\in U$, if and only if $0 \in \partial \mathcal{R}(u)$. Let $u\in \mathrm{dom}\,\mathcal{R}$ and $u^* \in \partial \mathcal{R}(u)$. The \emph{Bregman distance} associated to $\mathcal{R}$ at $(u,u^*)$ is defined as
$$ D_{u^*}(v;u) = \mathcal{R}(v) - \mathcal{R}(u) - \langle u^*,v - u \rangle_{U^*,U}.$$
The following lemma justifies the use of the Bregman distance as a similarity measure.
\begin{lem}
	The Bregman distance is nonnegative and satisfies $D_{u^*}(u;u) = 0$.
\end{lem}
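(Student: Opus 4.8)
The plan is to read off both claims directly from the two defining ingredients at hand: the definition of the Bregman distance itself and the defining inequality of the subdifferential. Since the lemma presupposes $u \in \mathrm{dom}\,\mathcal{R}$ and $u^* \in \partial\mathcal{R}(u)$, both objects are well-defined and, crucially, we are entitled to use the subgradient inequality.

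For the vanishing at the base point I would simply substitute $v = u$ into the definition. Because $\langle u^*, u - u\rangle_{U^*,U} = \langle u^*, 0\rangle_{U^*,U} = 0$ and $\mathcal{R}(u) - \mathcal{R}(u) = 0$, one obtains $D_{u^*}(u;u) = 0$ at once. This step is purely formal and uses nothing beyond linearity of the dual pairing in its second argument.

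For nonnegativity I would invoke the definition of the subdifferential. Since $u^* \in \partial\mathcal{R}(u)$ and $\mathcal{R}(u) \in \mathbb{R}$, the defining inequality $\mathcal{R}(v) \ge \mathcal{R}(u) + \langle u^*, v-u\rangle_{U^*,U}$ holds for every $v \in U$. Rearranging this inequality yields precisely $\mathcal{R}(v) - \mathcal{R}(u) - \langle u^*, v-u\rangle_{U^*,U} \ge 0$, that is, $D_{u^*}(v;u) \ge 0$ for all $v \in U$.

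There is no genuine obstacle here: the statement is nothing more than a reformulation of the subgradient inequality, and convexity of $\mathcal{R}$ is never invoked — the sole ingredient is that $u^*$ belongs to the (possibly nonconvex) subdifferential as defined above. The one point worth flagging is that nonnegativity is asserted only for pairs $(u,u^*)$ with $u^* \in \partial\mathcal{R}(u)$; were the subdifferential empty at $u$, the Bregman distance would be undefined, so the hypotheses of the lemma are exactly what make the two-line argument go through.
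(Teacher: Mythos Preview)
Your argument is correct and is exactly the standard two-line verification: substitute $v=u$ for the vanishing, and rearrange the subgradient inequality for nonnegativity. The paper states this lemma without proof, so there is nothing to compare against; your write-up supplies the intended elementary justification.
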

The Bregman distance is only defined at points where $\mathcal{R}$ has a subgradient. For convex functions these points can be characterized easily. The first two of the following three lemmas are classical results on subdifferentiability of convex functions. The third one deals with the special case of integral functionals on Sobolev spaces.
\begin{lem}\label{thm:subdif1}
	Let $\mathcal{R}:U\to \mathbb{R}\cup \{\pm\infty\}$ be a convex function. If $\mathcal{R}$ is finite and continuous at one point $\bar u\in U$, then $\partial \mathcal{R}(u) \neq \emptyset$ for all $u\in \mathrm{int} \, \mathrm{dom} \, \mathcal{R}$.
\end{lem}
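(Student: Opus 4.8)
The plan is to prove this via the geometric (supporting hyperplane) form of the Hahn--Banach theorem applied to the epigraph of $\mathcal{R}$, exploiting the fact that continuity forces the epigraph to have nonempty interior. Fix $u \in \mathrm{int}\,\mathrm{dom}\,\mathcal{R}$ and set $\mathrm{epi}\,\mathcal{R} = \{(v,t)\in U\times\mathbb{R} : t \ge \mathcal{R}(v)\}$, which is convex precisely because $\mathcal{R}$ is convex.

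First I would upgrade the hypothesis of continuity at the single point $\bar u$ to continuity at $u$. This rests on the classical fact that a convex function which is bounded above on a neighborhood of one point of its domain is locally Lipschitz, hence continuous, on the whole interior of its domain; since $\mathcal{R}$ is finite and continuous at $\bar u$ it is in particular bounded above near $\bar u$, so continuity propagates to every point of $\mathrm{int}\,\mathrm{dom}\,\mathcal{R}$, and in particular to $u$. Continuity at $u$ then guarantees that $\mathrm{epi}\,\mathcal{R}$ has nonempty interior: any point lying strictly above the graph, such as $(u,\mathcal{R}(u)+1)$, is interior. At the same time $(u,\mathcal{R}(u))$ lies on the boundary of $\mathrm{epi}\,\mathcal{R}$.

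Next I would invoke the supporting hyperplane theorem at the boundary point $(u,\mathcal{R}(u))$ of the convex set $\mathrm{epi}\,\mathcal{R}$, which now has nonempty interior. Identifying $(U\times\mathbb{R})^* \cong U^*\times\mathbb{R}$, this produces a nonzero pair $(u^*,s)$ with
$$\langle u^*, v\rangle_{U^*,U} + s\,t \le \langle u^*, u\rangle_{U^*,U} + s\,\mathcal{R}(u) \quad\text{for all } (v,t)\in\mathrm{epi}\,\mathcal{R}.$$
Letting $t\to+\infty$ with $v=u$ forces $s\le 0$. The crucial step is to exclude $s=0$: if $s=0$ the inequality would read $\langle u^*, v-u\rangle_{U^*,U}\le 0$ for every $v\in\mathrm{dom}\,\mathcal{R}$, and since $u$ is an interior point of the domain this would hold for all $v$ in a ball around $u$, forcing $u^*=0$ and contradicting $(u^*,s)\neq 0$. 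Hence $s<0$.

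Finally I would set $t=\mathcal{R}(v)$ for $v\in\mathrm{dom}\,\mathcal{R}$, divide by $s<0$ (which reverses the inequality), and define $\hat u^* = -u^*/s \in U^*$ to obtain
$$\mathcal{R}(v) \ge \mathcal{R}(u) + \langle \hat u^*, v-u\rangle_{U^*,U} \quad\text{for all } v\in\mathrm{dom}\,\mathcal{R}.$$
For $v\notin\mathrm{dom}\,\mathcal{R}$ the inequality is trivial, since the right-hand side is finite while $\mathcal{R}(v)=+\infty$, so it holds for all $v\in U$; thus $\hat u^*\in\partial\mathcal{R}(u)$ and $\partial\mathcal{R}(u)\neq\emptyset$. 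The main obstacle, and the only place where the interiority assumption $u\in\mathrm{int}\,\mathrm{dom}\,\mathcal{R}$ is genuinely used, is ruling out the degenerate vertical separating hyperplane $s=0$; the remaining work is routine once the propagation of continuity and the nonemptiness of the interior of the epigraph have been secured.
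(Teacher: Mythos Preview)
Your argument is correct and is the classical Hahn--Banach/epigraph proof of this result; the only point you leave implicit is that $\mathcal{R}$ cannot take the value $-\infty$ under the hypotheses (which follows at once from convexity together with finiteness and continuity at $\bar u$), so that $v\notin\mathrm{dom}\,\mathcal{R}$ really does force $\mathcal{R}(v)=+\infty$. The paper does not give its own proof but simply cites Proposition~5.2 in Chapter~I of Ekeland--T\'emam, where essentially the same separation argument appears, so your write-up is a self-contained version of what the paper defers to the literature.
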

\begin{proof}
	See Proposition 5.2 in Chapter I of \cite{EkeTem99}.
\end{proof}
\begin{lem}\label{thm:subdif2}
	If $\mathcal{R}:U\to \mathbb{R}\cup \{\pm\infty\}$ is proper, convex and lower semicontinuous, then the set $\{u \in U : \partial \mathcal{R}(u) \neq \emptyset \}$ is dense in $\mathrm{dom}\, \mathcal{R}$.
\end{lem}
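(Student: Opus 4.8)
The plan is to establish this density result, which is essentially the Brøndsted–Rockafellar theorem, through Ekeland's variational principle together with a separation argument in the product space $U \times \mathbb{R}$. I would fix a point $u_0 \in \mathrm{dom}\,\mathcal{R}$ and a radius $\delta > 0$, the goal being to produce a point $u$ with $\norm{u - u_0} \le \delta$ and $\partial\mathcal{R}(u) \neq \emptyset$.

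First I would show that the $\epsilon$-subdifferential $\partial_\epsilon\mathcal{R}(u_0)$, consisting of those $u^*$ with $\mathcal{R}(v) \ge \mathcal{R}(u_0) + \langle u^*, v-u_0\rangle_{U^*,U} - \epsilon$ for all $v$, is nonempty for every $\epsilon > 0$. Since $\mathcal{R}$ is proper, convex and lower semicontinuous, its epigraph is a nonempty closed convex subset of $U \times \mathbb{R}$, whereas the point $(u_0, \mathcal{R}(u_0) - \epsilon)$ lies outside it. A Hahn–Banach separation yields a functional $(w^*, s) \in U^* \times \mathbb{R}$; letting the second coordinate tend to $+\infty$ along the epigraph forces $s \ge 0$, and evaluating the separating inequality at $u_0$ itself rules out $s = 0$. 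After normalizing $s = 1$ and rearranging, one obtains $u_0^* := -w^*$ with $u_0^* \in \partial_\epsilon\mathcal{R}(u_0)$.

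Next I would localize this approximate subgradient into an exact one. Setting $g(v) := \mathcal{R}(v) - \langle u_0^*, v\rangle_{U^*,U}$, the $\epsilon$-subgradient property reads $g(u_0) \le \inf_U g + \epsilon$, so $u_0$ is an $\epsilon$-minimizer of the proper, lower semicontinuous, bounded-below functional $g$ on the complete space $U$. Ekeland's variational principle then provides, for the choice $\lambda = \delta$, a point $u$ with $\norm{u - u_0} \le \delta$ that strictly minimizes $v \mapsto g(v) + \tfrac{\epsilon}{\delta}\norm{v - u}$, which keeps $u$ within the prescribed radius of $u_0$.

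Finally, Fermat's rule applied to this perturbed functional gives $0 \in \partial\bigl(g + \tfrac{\epsilon}{\delta}\norm{\cdot - u}\bigr)(u)$. Because the norm term is finite and continuous everywhere, the convex subdifferential sum rule applies, and the subdifferential of $\tfrac{\epsilon}{\delta}\norm{\cdot - u}$ at $u$ is the ball $\{p^* : \norm{p^*} \le \epsilon/\delta\}$; hence $\partial g(u) \neq \emptyset$, and consequently $\partial\mathcal{R}(u) = u_0^* + \partial g(u) \neq \emptyset$. I expect the main obstacle to be the separation step producing the $\epsilon$-subgradient, specifically ruling out a vertical separating hyperplane, rather than the Ekeland argument, which is by now a routine tool; the sum rule is unproblematic since the norm is globally Lipschitz.
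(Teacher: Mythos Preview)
Your argument is correct: this is the standard Br{\o}ndsted--Rockafellar proof, and each of your three steps (existence of an $\epsilon$-subgradient via separation of $(u_0,\mathcal{R}(u_0)-\epsilon)$ from the closed convex epigraph, localization via Ekeland's variational principle applied to $g = \mathcal{R} - \langle u_0^*,\cdot\rangle$, and Fermat's rule together with the Moreau--Rockafellar sum rule) goes through as you describe. The paper does not give its own proof at all but simply cites Corollary~6.2 in Chapter~I of Ekeland--T\'emam, whose argument is essentially the one you have reproduced; so there is no meaningful difference in approach, only in level of detail.
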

\begin{proof}
	See Corollary 6.2 in Chapter I of \cite{EkeTem99}.
\end{proof}
\begin{lem}\label{thm:subdif3}
Let $\Omega \subset \mathbb{R}^n$ be an open set and let
	$$f:\Omega \times \mathbb{R}^N \times \mathbb{R}^{N\times n} \to [0,+\infty]$$
be a Carath\'eodory function. Assume that, for almost every $x\in \Omega$, the map $(u,A) \mapsto f(x,u,A)$ is convex and differentiable throughout its effective domain. Let $p\in[1,\infty)$ and define the following functional on $ W^{1,p}(\Omega,\mathbb{R}^N)$
	$$ \mathcal{R}(v) = \int_\Omega f(x,v(x),\nabla v(x)) \, dx.$$
Denote by $\nabla_{u,A} f$ the gradient of $f$ with respect to its second and third variables. If $v\in \mathrm{dom}\, \mathcal{R}$ and the function
$$x \mapsto \nabla_{u,A} f (x,v(x),\nabla v(x))$$
lies in $L^{p^*}(\Omega,\mathbb{R}^N \times \mathbb{R}^{N\times n})$, then this function is a subgradient of $\mathcal{R}$ at $v$.
\end{lem}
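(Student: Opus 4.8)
The plan is to verify the subgradient inequality directly from the definition. Writing $v^* \in L^{p^*}(\Omega,\mathbb{R}^N \times \mathbb{R}^{N\times n})$ for the function $x \mapsto \nabla_{u,A} f(x,v(x),\nabla v(x))$ and identifying it with the element of $U^*$ it induces, I must show
$$\mathcal{R}(w) \geq \mathcal{R}(v) + \langle v^*, w - v\rangle_{U^*,U} \quad \text{for every } w \in W^{1,p}(\Omega,\mathbb{R}^N).$$
The entire argument rests on integrating a single pointwise convexity inequality, so the core work is to set that inequality up and then to justify the integration despite the possibility that $\mathcal{R}(w) = +\infty$.

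First I would record that $(v(x),\nabla v(x))$ lies in the effective domain of $g_x := f(x,\cdot,\cdot)$ for almost every $x$: since $v \in \mathrm{dom}\,\mathcal{R}$ and $f \geq 0$, the nonnegative integrand $f(x,v(x),\nabla v(x))$ has finite integral and is therefore finite almost everywhere. Consequently $\nabla_{u,A} f(x,v(x),\nabla v(x))$ is defined a.e., and its measurability in $x$ follows from the Carath\'eodory assumption on $f$. At each such $x$, convexity of $g_x$ together with its differentiability at the point $(v(x),\nabla v(x))$ means the gradient is a subgradient, so the supporting-hyperplane inequality
$$g_x(u,A) \geq g_x(v(x),\nabla v(x)) + \nabla_{u,A} f(x,v(x),\nabla v(x)) \cdot \bigl((u,A) - (v(x),\nabla v(x))\bigr)$$
holds for all $(u,A)$. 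Evaluating at $(u,A) = (w(x),\nabla w(x))$ yields the desired pointwise inequality for a.e. $x$.

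It then remains to integrate over $\Omega$. The term $f(x,v(x),\nabla v(x))$ lies in $L^1(\Omega)$ because $\mathcal{R}(v) < \infty$; the gradient term is integrable by H\"older's inequality, since $v^* \in L^{p^*}$ while $(w-v,\nabla w - \nabla v) \in L^p(\Omega,\mathbb{R}^N \times \mathbb{R}^{N\times n})$. Hence the right-hand side is integrable with integral equal to $\mathcal{R}(v) + \langle v^*, w-v\rangle_{U^*,U}$, the pairing splitting into its $L^{p^*}$--$L^p$ parts exactly as in the identification of $U^*$ fixed in Section \ref{sec:Bregman}. The left-hand side $f(x,w(x),\nabla w(x))$ is nonnegative, so its integral $\mathcal{R}(w) \in [0,+\infty]$ is well-defined; integrating the pointwise inequality (trivially valid where the left-hand side equals $+\infty$) gives the claim.

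The only genuine obstacle is this last integration step: one must ensure the inequality survives passage to integrals when $\mathcal{R}(w)$ may be infinite. Nonnegativity of $f$ is what makes this painless, since it guarantees both that $f(x,v(x),\nabla v(x)) \in L^1$ and that the left-hand integral is unambiguously defined, so no cancellation issues arise and monotonicity of the integral applies termwise. A secondary point worth stating explicitly is that differentiability of $g_x$ on its effective domain, combined with convexity, delivers a genuine global subgradient rather than merely a local first-order condition; this is where convexity is essential.
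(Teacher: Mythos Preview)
Your argument is correct: the pointwise subgradient inequality for the convex map $(u,A)\mapsto f(x,u,A)$, evaluated at $(w(x),\nabla w(x))$ and integrated with the integrability justifications you give, yields exactly the required global inequality. The paper does not spell this out but simply invokes Lemma~4.1 in Chapter~X of \cite{EkeTem99}, whose content is precisely the direct verification you carry out; so your approach is essentially the same as the one the paper defers to.
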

\begin{proof}
	This is a direct consequence of Lemma 4.1 in Chapter X of \cite{EkeTem99}.
\end{proof}
\begin{dfn} Let $W$ be a family of real-valued functions defined on $U$. Following \cite{Gra10b,Sin97} we define the \emph{W-subdifferential} of $\mathcal{R}$ at $u\in U$ as
\begin{equation*}
	\partial_W \mathcal{R}(u) =
	\begin{cases}
		\{w \in W : \mathcal{R}(v) \ge \mathcal{R}(u) + w(v) - w(u) \text{ for all } v \in U\}, & \mathcal{R}(u) \in \mathbb{R} \\
		\emptyset, & \mathcal{R}(u) \notin \mathbb{R}.
	\end{cases}
\end{equation*}
For $w\in \partial_W \mathcal{R}(u)$ the corresponding $W$-\emph{Bregman distance} is given by
\begin{equation} \label{eq:wbregman}
	D^W_w(v;u) = \mathcal{R}(v) - \mathcal{R}(u) - w(v) + w(u).
\end{equation}
\end{dfn}
Clearly, the $U^*$-subdifferential and the $U^*$-Bregman distance coincide with their classical counterparts.
\begin{lem}
	The $W$-Bregman distance is nonnegative and satisfies $$D^W_w(u;u) = 0.$$
\end{lem}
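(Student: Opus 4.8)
The plan is to read both claims directly off the definition of the $W$-subdifferential, since the defining inequality for membership $w\in\partial_W\mathcal{R}(u)$ is, after a trivial rearrangement, exactly the assertion that the $W$-Bregman distance is nonnegative. This mirrors the elementary argument for the classical Bregman distance, with the linear term $\langle u^*, v-u\rangle_{U^*,U}$ replaced by the increment $w(v)-w(u)$ of the (possibly nonlinear) function $w\in W$.

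For nonnegativity I would begin from the hypothesis $w\in\partial_W\mathcal{R}(u)$. Since the $W$-Bregman distance is only defined in this situation, we have $\mathcal{R}(u)\in\mathbb{R}$, and the defining property of the $W$-subdifferential gives
$$ \mathcal{R}(v) \ge \mathcal{R}(u) + w(v) - w(u) \qquad \text{for all } v\in U. $$
Bringing the right-hand side over and comparing with the definition \eqref{eq:wbregman} yields $D^W_w(v;u)\ge 0$ for every $v\in U$. For the vanishing at the base point I would simply substitute $v=u$ into \eqref{eq:wbregman}, whereupon the two pairs of terms cancel and $D^W_w(u;u)=0$.

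There is no genuine obstacle to overcome here: both statements are immediate consequences of the definition, and no convexity or regularity of $\mathcal{R}$ is needed. The only subtlety worth flagging is the implicit finiteness $\mathcal{R}(u)\in\mathbb{R}$, which is automatic whenever a subgradient $w$ exists, since $\partial_W\mathcal{R}(u)=\emptyset$ by definition when $\mathcal{R}(u)\notin\mathbb{R}$.
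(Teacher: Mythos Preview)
Your proof is correct. The paper states this lemma without proof, treating it as an immediate consequence of the definitions; your argument is precisely the obvious one the authors presumably had in mind when omitting it.
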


\subsection{Variational regularization on Banach spaces} \label{sec:regularization}
Let $U$, $V$ be Banach spaces and $K: U \to V$. We consider the inverse problem of finding $u \in U$ such that
\begin{equation}\label{eq:ip}
	K(u) = v^\dagger.
\end{equation}
Exact data $v^\dagger \in \mathrm{ran}\,K \subset V$ are assumed to be available as noisy measurements $v^\delta \in V$ only, satisfying $\| v^\dagger - v^\delta \| \le \delta$ for some $\delta \ge 0$. Since such problems are ill-posed in general, regularization is needed for the approximate inversion of $K$. Variational regularization consists in minimization of a functional of the form
\begin{equation}
	u \mapsto \mathcal{T}_\alpha (u; v^\delta) = \| K(u) - v^\delta \|^q + \alpha \mathcal{R}(u),
\end{equation}
where $q \ge 1$, $\alpha > 0$, $\|\cdot\|$ denotes the norm on $V$ and $\mathcal{R} : U \to [0,+\infty]$ is such that $\mathrm{dom}\,\mathcal{R} \cap \mathcal{D}(K) \neq \emptyset$. This variational approach leads to a \emph{well-defined regularization method}, if it fulfils the following requirements.
\begin{description}
	\item[Existence:] $\mathcal{T}_\alpha(\cdot;v^\delta)$ has a minimizer $u_\alpha^\delta$ for every $v^\delta\in V$ and $\alpha>0$.
	\item[Stability:] The inversion $v^\delta\mapsto u_\alpha^\delta$ is continuous.
	\item[Convergence:] There exists a parameter choice rule $\alpha:\mathbb{R}_+ \to \mathbb{R}_+$ such that regularized solutions $ u_\alpha^\delta$ converge to a solution of \eqref{eq:ip} as $\delta \to 0$.
\end{description}
The last point in particular requires that the set of \emph{exact solutions} $K^{-1}(v^\dagger)$ be nonempty. The subset of \emph{$\mathcal{R}$-minimizing solutions}, given by
	$$ \argmin \{ \mathcal{R}(u) : u \in K^{-1}(v^\dagger) \},$$
is of importance for the following theorem summarizing well-definedness of variational regularization methods in Banach spaces (cf.\ Section 3.2 of \cite{SchGraGroHalLen09}).
\begin{thm}\label{thm:welldef}
	Endow the Banach spaces $U$ and $V$ with topologies weaker than the respective norm topologies. Assume that the following four statements hold with respect to these topologies:
	\begin{enumerate}
		\item The sublevel sets of $\mathcal{T}_\alpha(\cdot;v^\dagger)$ are sequentially precompact.
		\item $\| \cdot \|$ is sequentially lower semicontinuous.
		\item $\mathcal{R}$ is sequentially lower semicontinuous.
		\item The sublevel sets of $\mathcal{T}_\alpha(\cdot;v^\dagger)$ are sequentially closed and $K$ is sequentially continuous there.
	\end{enumerate}
	Then the functional $\mathcal{T}_\alpha(\cdot;v^\delta)$ has a minimum for all $\alpha>0$ and $v^\delta \in V$. Moreover minimization of $\mathcal{T}_\alpha$ is continuous in the following sense. Whenever $\|v_k-v^\delta\|\to 0$, then every sequence $(u_k)$, $u_k \in \argmin \mathcal{T}_\alpha(\cdot;v_k)$, has a converging subsequence and the limit of every such sequence is a minimizer of $\mathcal{T}_\alpha(\cdot;v^\delta)$. Assume, in addition, that
	\begin{enumerate}\setcounter{enumi}{4}
		\item there is an exact solution, i.e.~$v^\dagger \in \mathrm{ran}\, K$ and
		\item the parameter choice rule $\alpha : \mathbb{R}_+ \to \mathbb{R}_+$ satisfies $\alpha(\delta) \to 0$ and $\delta^q/\alpha(\delta) \to 0$ as $\delta \to 0$.
	\end{enumerate}
	Then, whenever $\delta_k \to 0$, every sequence $(u_k)$, $u_k \in \argmin \mathcal{T}_\alpha(\cdot;v^{\delta_k})$, has a converging subsequence and the limit of every such sequence is an $\mathcal{R}$-minimizing solution.
\end{thm}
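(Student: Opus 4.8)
The plan is to follow the direct method of the calculus of variations for the existence and stability claims, and then the standard regularization argument based on comparison with an exact solution for the convergence claim; throughout, ``convergence'' and ``lower semicontinuity'' are understood with respect to the chosen weak topologies. Two preliminary observations streamline everything. First, sublevel sets of $\mathcal{T}_\alpha(\cdot;v^\delta)$ are contained in sublevel sets of $\mathcal{T}_\alpha(\cdot;v^\dagger)$: if $\mathcal{T}_\alpha(u;v^\delta)\le c$, then $\alpha\mathcal{R}(u)\le c$ and, using $\|K(u)-v^\dagger\|^q \le 2^{q-1}(\|K(u)-v^\delta\|^q + \delta^q)$, the term $\|K(u)-v^\dagger\|^q$ is bounded in terms of $c$ and $\delta$, so assumption 1 transfers the required precompactness to the noisy functional. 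Second, assumptions 2 and 4 together make $u\mapsto\|K(u)-v^\delta\|^q$ sequentially lower semicontinuous on sublevel sets: if $u_k\to u$ there, then $K(u_k)\to K(u)$ by assumption 4 and lower semicontinuity of the norm yields $\|K(u)-v^\delta\|\le\liminf_k\|K(u_k)-v^\delta\|$. For \textbf{existence}, I would take a minimizing sequence $(u_k)$ for $\mathcal{T}_\alpha(\cdot;v^\delta)$, whose infimum is finite because $\mathrm{dom}\,\mathcal{R}\cap\mathcal{D}(K)\neq\emptyset$; it eventually lies in a sublevel set, hence by the first observation and assumption 1 has a subsequence $u_{k_j}\to u_\alpha^\delta$, and adding the lower semicontinuity of the fidelity term to that of $\mathcal{R}$ (assumption 3) gives $\mathcal{T}_\alpha(u_\alpha^\delta;v^\delta)\le\liminf_j\mathcal{T}_\alpha(u_{k_j};v^\delta)$, so the limit is a minimizer.

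For \textbf{stability}, suppose $\|v_k-v^\delta\|\to 0$ and $u_k\in\argmin\mathcal{T}_\alpha(\cdot;v_k)$. Comparing $u_k$ with a fixed $\tilde u\in\mathrm{dom}\,\mathcal{R}\cap\mathcal{D}(K)$ gives $\mathcal{T}_\alpha(u_k;v_k)\le\|K(\tilde u)-v_k\|^q+\alpha\mathcal{R}(\tilde u)$, whose right-hand side is bounded since $v_k$ converges; hence $(u_k)$ lies in a sublevel set and, up to a subsequence, $u_{k_j}\to\bar u$. The conclusion $\bar u\in\argmin\mathcal{T}_\alpha(\cdot;v^\delta)$ follows from the chain
\[
\mathcal{T}_\alpha(\bar u;v^\delta)\le\liminf_j\mathcal{T}_\alpha(u_{k_j};v_{k_j})\le\limsup_j\mathcal{T}_\alpha(u_{k_j};v_{k_j})\le\limsup_j\mathcal{T}_\alpha(u;v_{k_j})=\mathcal{T}_\alpha(u;v^\delta),
\]
valid for every $u\in U$. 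The first inequality is the subtle one, since both the argument $u_{k_j}$ and the data $v_{k_j}$ move: I would use $K(u_{k_j})-v_{k_j}\to K(\bar u)-v^\delta$ (from assumption 4 and $v_{k_j}\to v^\delta$) together with lower semicontinuity of the norm, and lower semicontinuity of $\mathcal{R}$; the last equality uses continuity of the fidelity term in the data.

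For \textbf{convergence}, fix an $\mathcal{R}$-minimizing solution $u^\dagger$, whose existence follows from $v^\dagger\in\mathrm{ran}\,K$ together with a further application of the direct method to $\mathcal{R}$ on the constraint set $K^{-1}(v^\dagger)$. Set $\alpha_k=\alpha(\delta_k)$ and $u_k\in\argmin\mathcal{T}_{\alpha_k}(\cdot;v^{\delta_k})$. The minimality estimate
\[
\|K(u_k)-v^{\delta_k}\|^q+\alpha_k\mathcal{R}(u_k)\le\|v^\dagger-v^{\delta_k}\|^q+\alpha_k\mathcal{R}(u^\dagger)\le\delta_k^q+\alpha_k\mathcal{R}(u^\dagger)
\]
yields, on dividing by $\alpha_k$ and using $\delta_k^q/\alpha_k\to 0$ and $\alpha_k\to 0$, both $\|K(u_k)-v^\dagger\|\to 0$ and $\limsup_k\mathcal{R}(u_k)\le\mathcal{R}(u^\dagger)$. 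In particular $(u_k)$ sits in a sublevel set of $\mathcal{T}_\alpha(\cdot;v^\dagger)$ for any fixed $\alpha$, so a subsequence converges to some $\hat u$. Continuity of $K$ identifies the weak limit $K(\hat u)$ with the norm limit $v^\dagger$, giving $K(\hat u)=v^\dagger$, while lower semicontinuity of $\mathcal{R}$ gives $\mathcal{R}(\hat u)\le\liminf_j\mathcal{R}(u_{k_j})\le\mathcal{R}(u^\dagger)$; since $\hat u$ is itself an exact solution, it must be $\mathcal{R}$-minimizing.

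I expect the main obstacle to be the bookkeeping in the stability step, namely keeping the moving data $v_{k_j}$ and the moving iterates $u_{k_j}$ separate and correctly combining sequential continuity of $K$ on sublevel sets with lower semicontinuity (not continuity) of the norm, rather than any single hard estimate. The transfer of precompactness from the exact to the noisy functional via the elementary $q$-norm inequality is the other point that must be handled with a little care, since assumption 1 is phrased only for $\mathcal{T}_\alpha(\cdot;v^\dagger)$.
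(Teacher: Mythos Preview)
The paper does not supply its own proof of this theorem; it is stated as a summary result with a reference to Section~3.2 of \cite{SchGraGroHalLen09}. Your argument is the standard one found there: direct method for existence, comparison with a fixed admissible element for stability, and the minimality estimate against an exact solution for convergence. The two observations you isolate---transferring precompactness from $\mathcal{T}_\alpha(\cdot;v^\dagger)$ to $\mathcal{T}_\alpha(\cdot;v^\delta)$ via the elementary $q$-norm inequality, and combining sequential continuity of $K$ on sublevel sets with lower semicontinuity of the norm---are exactly the points that need care, and you handle them correctly.

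One minor simplification: in the convergence step you need not first establish existence of an $\mathcal{R}$-minimizing solution by a separate direct-method argument. It suffices to compare with \emph{any} exact solution $u^\dagger$ having $\mathcal{R}(u^\dagger)<\infty$; the conclusion $\mathcal{R}(\hat u)\le\mathcal{R}(u^\dagger)$ then holds for all such $u^\dagger$, so $\hat u$ is automatically $\mathcal{R}$-minimizing (and its existence is produced by the argument itself). This also sidesteps having to verify separately that $K^{-1}(v^\dagger)$ is sequentially closed.
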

\begin{rmk}
Note that convexity of $\mathcal{R}$ is not required for a well-defined regularization method.
\end{rmk}
In principle convergence of regularized solutions can be arbitrarily slow. Therefore it is useful to have a bound in terms of $\delta$ on the discrepancy between regularized and exact solution. In a Banach space setting a typical discrepancy measure is the Bregman distance associated to the regularization functional \cite{BurOsh04}. Concerning convergence rates for variational regularization in Banach spaces we have the following result (cf.\ \cite{HofKalPoeSch07} or Section 3.2 of \cite{SchGraGroHalLen09}).
\begin{thm}\label{thm:convrates}
	Let the assumptions of the previous theorem hold. In addition assume that $\mathcal{R}$ has a subgradient $u^*$ at an $\mathcal{R}$-minimizing solution $u^\dagger$ and that there are constants $\beta_1 \in [0,1)$ and $\bar\alpha, \beta_2, \rho > 0$ satisfying $\bar\alpha\mathcal{R}(u^\dagger) < \rho$ such that
	\begin{equation}\label{eq:sourcecon}
		\langle u^* , u^\dagger - u \rangle \le \beta_1 D_{u^*} (u;u^\dagger) + \beta_2 \| K(u) - v^\dagger\|
	\end{equation}
	holds for all $u$ with $\mathcal{T}_{\bar \alpha} (u;v^\dagger) \le \rho$.
	
	If $q>1$, assume $\alpha(\delta) \sim \delta^{q-1}$. Then
	\begin{equation*}
		D_{u^*} (u^\delta_{\alpha};u^\dagger) = O(\delta) \quad \text{and} \quad \|K(u^\delta_{\alpha}) - v^\delta\| 	= O(\delta).
	\end{equation*}
	
	If $q=1$, assume $\alpha(\delta) \sim \delta^\epsilon$ for $\epsilon \in [0,1)$. If $\epsilon = 0$, additionally assume that $0 < \alpha(\delta) \beta_2 < 1$. Then
	\begin{equation*}
		D_{u^*} (u^\delta_{\alpha};u^\dagger) = O(\delta^{1-\epsilon}) \quad \text{and} \quad \|K(u^\delta_{\alpha}) - v^\delta\| 	= O(\delta).
	\end{equation*}
\end{thm}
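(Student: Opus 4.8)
The plan is to run the standard Tikhonov comparison argument, reducing everything to a single ``master estimate'' and then reading off the two rates by case analysis; the one genuine subtlety is that the source condition \eqref{eq:sourcecon} is a \emph{local} hypothesis, valid only on the sublevel set $\{u : \mathcal{T}_{\bar\alpha}(u;v^\dagger) \le \rho\}$, so I must first verify that the regularized solutions eventually enter this set. To begin, I would exploit minimality of $u_\alpha^\delta$ together with $\norm{K(u^\dagger) - v^\delta} = \norm{v^\dagger - v^\delta} \le \delta$ to get
\begin{equation*}
	\norm{K(u_\alpha^\delta) - v^\delta}^q + \alpha \mathcal{R}(u_\alpha^\delta) \le \delta^q + \alpha \mathcal{R}(u^\dagger).
\end{equation*}
Abbreviating $D := D_{u^*}(u_\alpha^\delta;u^\dagger)$ and substituting $\mathcal{R}(u_\alpha^\delta) = \mathcal{R}(u^\dagger) + \langle u^*, u_\alpha^\delta - u^\dagger\rangle + D$ cancels the two $\alpha\mathcal{R}(u^\dagger)$ terms. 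Bounding $-\langle u^*, u_\alpha^\delta - u^\dagger\rangle = \langle u^*, u^\dagger - u_\alpha^\delta\rangle \le \beta_1 D + \beta_2 \norm{K(u_\alpha^\delta) - v^\dagger}$ via \eqref{eq:sourcecon} and using $\norm{K(u_\alpha^\delta) - v^\dagger} \le d + \delta$ with $d := \norm{K(u_\alpha^\delta) - v^\delta}$ then yields the master estimate
\begin{equation*}
	d^q + \alpha(1-\beta_1) D \le \delta^q + \alpha\beta_2 d + \alpha\beta_2 \delta.
\end{equation*}

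Before invoking \eqref{eq:sourcecon} I must justify its applicability. From the first inequality above I read off $\mathcal{R}(u_\alpha^\delta) \le \mathcal{R}(u^\dagger) + \delta^q/\alpha$ and $d^q \le \delta^q + \alpha\mathcal{R}(u^\dagger)$; since the parameter choice enforces $\alpha \to 0$ and $\delta^q/\alpha \to 0$, it follows that $d \to 0$, hence $\norm{K(u_\alpha^\delta) - v^\dagger} \le d + \delta \to 0$, and therefore
\begin{equation*}
	\mathcal{T}_{\bar\alpha}(u_\alpha^\delta; v^\dagger) = \norm{K(u_\alpha^\delta)-v^\dagger}^q + \bar\alpha \mathcal{R}(u_\alpha^\delta) \le \bar\alpha \mathcal{R}(u^\dagger) + o(1).
\end{equation*}
Because $\bar\alpha \mathcal{R}(u^\dagger) < \rho$ \emph{strictly}, the constraint $\mathcal{T}_{\bar\alpha}(u_\alpha^\delta;v^\dagger) \le \rho$ holds for all sufficiently small $\delta$, which is exactly what the asymptotic (big-$O$) claim needs.

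It then remains to extract the rates from the master estimate. For $q>1$ I would absorb the cross term by Young's inequality, $\alpha\beta_2 d \le \tfrac12 d^q + C(\alpha\beta_2)^{q'}$ with $q' = q/(q-1)$ and $C=C(q)$, leaving $\tfrac12 d^q + \alpha(1-\beta_1)D \le \delta^q + C(\alpha\beta_2)^{q'} + \alpha\beta_2\delta$. The calibration $\alpha \sim \delta^{q-1}$ is precisely what makes every right-hand term of order $\delta^q$, since $(\alpha\beta_2)^{q'} \sim \delta^{(q-1)q'} = \delta^q$ and $\alpha\delta \sim \delta^q$; hence $d = O(\delta)$ and $D = O(\delta^q/\alpha) = O(\delta)$. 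For $q=1$ no splitting is needed: I rearrange the master estimate to $d(1-\alpha\beta_2) + \alpha(1-\beta_1)D \le \delta(1+\alpha\beta_2)$, note that $1-\alpha\beta_2 > 0$ (automatic when $\epsilon>0$, since then $\alpha\to 0$, and assumed when $\epsilon=0$), and discard the two nonnegative left-hand terms in turn: dropping the $D$-term gives $d = O(\delta)$, while dropping the $d$-term gives $\alpha(1-\beta_1)D \le \delta(1+\alpha\beta_2)$, i.e.\ $D = O(\delta^{1-\epsilon})$ under $\alpha\sim\delta^\epsilon$.

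I expect the main obstacle to lie not in the algebra but in the two places where care is required: first, the applicability check of the preceding paragraph, which is what legitimizes using the \emph{local} source condition \eqref{eq:sourcecon} at the point $u=u_\alpha^\delta$ and is the reason the strict inequality $\bar\alpha\mathcal{R}(u^\dagger)<\rho$ appears in the hypotheses; and second, matching the exponents so that the term $(\alpha\beta_2)^{q'}$ produced by Young's inequality scales like $\delta^q$, which is exactly the role played by the choice $\alpha \sim \delta^{q-1}$.
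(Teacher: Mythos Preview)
Your proposal is correct and follows precisely the standard argument from \cite{HofKalPoeSch07,SchGraGroHalLen09} that the paper defers to (the paper does not give its own proof of this theorem but cites these references, and later omits the proof of Theorem~\ref{thm:polyconvrates} as ``analogous''). In particular, your careful handling of the local nature of \eqref{eq:sourcecon} via the strict inequality $\bar\alpha\mathcal{R}(u^\dagger)<\rho$, and the Young-inequality splitting calibrated by $\alpha\sim\delta^{q-1}$, are exactly the ingredients used in those sources.
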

\begin{rmk}\label{rmk:applicability}
In contrast to \cite{HofKalPoeSch07,SchGraGroHalLen09} we have not assumed $\mathcal{R}$ to be convex and have not added any other assumption in its place. This does not change the theorem's validity. However, since standard results on subdifferentiability (Lemmas \ref{thm:subdif1}, \ref{thm:subdif2} and \ref{thm:subdif3}) require $\mathcal{R}$ to be convex, the condition $u^*\in\partial\mathcal{R}(u^\dagger)$ must be expected to be difficult to satisfy for a general nonconvex $\mathcal{R}$. Yet in Example \ref{exmp:main} below we construct a problem where it \emph{is} satisfied.
\end{rmk}
\begin{rmk}\label{rmk:perfect}
If the regularization functional is chosen ``perfectly", that is, it has a global minimizer that is also an exact solution, then condition \eqref{eq:sourcecon} is always satisfied: Assume that $\bar{u}^\dagger$ is such a solution, that is, $K(\bar{u}^\dagger) = v^\dagger$ and $0\in\partial \mathcal{R}(\bar{u}^\dagger)$. Then, with $u^*=0$ inequality \eqref{eq:sourcecon} becomes
	$$ 0 \le \beta_1 (\mathcal{R}(u) - \mathcal{R}(\bar{u}^\dagger)) + \beta_2 \| K(u) - v^\dagger\|,$$
which is true for all $u\in U$ and all nonnegative $\beta_1,\beta_2$.
\end{rmk}

\subsection{Polyconvex functions} \label{sec:polyconvex}
Let $N,n \in \mathbb{N}$ and let $N\wedge n =\min(N,n)$. For $A\in \mathbb{R}^{N\times n}$ and $1\le s \le N\wedge n$ denote by $\mathrm{adj}_{s}(A)$ the matrix consisting of all $s\times s$ minors of $A$. Note that $\mathrm{adj}_{1}(A)=A$ and $\mathrm{adj}_{s}(A) \in \mathbb{R}^{\sigma(s)}$, where $\sigma(s) = \big( \begin{smallmatrix} N \\ s \end{smallmatrix} \big) \big( \begin{smallmatrix} n \\ s \end{smallmatrix} \big)$. Set $\tau(N,n) = \sum_{s=1}^{N\wedge n} \sigma(s)$ and denote by $T:\mathbb{R}^{N \times n} \to \mathbb{R}^{\tau(N,n)}$ the function that maps a matrix to the vector containing all its minors, which with a slight abuse of notation can be written as
$$T(A) = (A,\mathrm{adj}_{2}(A),\ldots,\mathrm{adj}_{m}(A)).$$
A function $f:\mathbb{R}^{N\times n} \to \mathbb{R}\cup \{+\infty\}$ is called \emph{polyconvex}, if there exists a convex function $F:\mathbb{R}^{\tau(N,n)} \to \mathbb{R}\cup\{+\infty\}$ satisfying $f = F \circ T$. Notice that this $F$ is not unique in general. Clearly, every convex function is polyconvex. If $N=1$ or $n=1$, then also the converse holds. See Chapter 5 in \cite{Dac08} for more details on polyconvex functions.

In the last part of the paper we will make use of the following variant of the map $T$. Set $\tau_2(N,n) = \sum_{s=2}^{N\wedge n} \sigma(s)$. We denote by $T_2:\mathbb{R}^{N \times n} \to \mathbb{R}^{\tau_2(N,n)}$ the function defined by
$$T_2(A) = (\mathrm{adj}_{2}(A),\ldots,\mathrm{adj}_{m}(A)).$$

Weak lower semicontinuity in $W^{1,p}(\Omega, \mathbb{R}^N)$, $p>N\wedge n$, of functionals of the form \eqref{eq:R(u)} can be established by requiring the integrand to be polyconvex in its third argument. The following theorem is a special case of the more general Theorem 8.16 in \cite{Dac08}.
\begin{lem}\label{thm:lsc}
Let $\Omega \subset \mathbb{R}^n$ be a bounded open set with Lipschitz boundary and let
$$ F: \Omega \times \mathbb{R}^N \times \mathbb{R}^{\tau(N,n)} \to [0,+\infty]$$
be a Carath\'eodory function such that the map $A \mapsto F(x,u,A)$ is convex for almost every $x \in \Omega$ and every $u \in \mathbb{R}^N$. Then, for $p>N\wedge n$, the functional
$$ u \mapsto \int_\Omega F(x,u(x),T(\nabla u (x)))\, dx $$
is sequentially weakly lower semicontinuous in $W^{1,p}(\Omega, \mathbb{R}^N)$.
\end{lem}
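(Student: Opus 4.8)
The plan is to reduce this polyconvex lower semicontinuity statement to two well-known building blocks: the weak continuity of the subdeterminant maps $\mathrm{adj}_{s}$ along weakly convergent Sobolev sequences, and the lower semicontinuity of convex Carath\'eodory integrands under weak convergence of their argument. So let $u_k \rightharpoonup u$ in $W^{1,p}(\Omega,\mathbb{R}^N)$. Since lower semicontinuity is a statement about $\liminf$, I may first pass to a subsequence attaining the $\liminf$ of the functional and then extract further subsequences freely, using that every subsequence has a further subsequence converging to the same limit. By the Rellich--Kondrachov theorem (here the bounded Lipschitz domain $\Omega$ is used) a subsequence of $(u_k)$ converges strongly in $L^p(\Omega,\mathbb{R}^N)$, and hence almost everywhere; this will take care of the $u$-dependence of $F$.

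The analytic core is the claim that for every $s\in\{1,\dots,N\wedge n\}$ one has $\mathrm{adj}_{s}(\nabla u_k)\rightharpoonup \mathrm{adj}_{s}(\nabla u)$ weakly in $L^{p/s}(\Omega,\mathbb{R}^{\sigma(s)})$. Because the hypothesis $p>N\wedge n\ge s$ forces $p/s>1$, each minor lives in a reflexive Lebesgue space and the products below are integrable, which is exactly what makes this regime tractable. I would prove the claim by induction on $s$, exploiting the fact that minors are null Lagrangians: each $s\times s$ minor can be rewritten in divergence form, schematically as a linear combination of terms $\partial_j\big(u^i\,M_{s-1}(\nabla u)\big)$, where $M_{s-1}$ is a suitable $(s-1)\times(s-1)$ cofactor. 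Pairing against a test function $\varphi\in C_c^\infty(\Omega)$ and integrating by parts transfers the outer derivative onto $\varphi$; the inductive hypothesis gives weak convergence of the $(s-1)$-minors in $L^{p/(s-1)}$, the strong $L^p$ convergence of $u_k$ controls the factor $u^i$, and the Hölder bookkeeping $\tfrac{1}{p}+\tfrac{s-1}{p}=\tfrac{s}{p}$ shows that the product converges weakly in $L^{p/s}$, so that the limit can be identified with the corresponding minor of $\nabla u$. Collecting over $s$ yields $T(\nabla u_k)\rightharpoonup T(\nabla u)$.

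At this stage I have two modes of convergence in hand: $u_k\to u$ almost everywhere and $T(\nabla u_k)\rightharpoonup T(\nabla u)$ weakly in $L^1(\Omega,\mathbb{R}^{\tau(N,n)})$. Since $F$ is a nonnegative Carath\'eodory function that is convex in its last slot, the classical lower semicontinuity theorem for convex integrands (Ioffe's theorem; cf.\ the corresponding results in \cite{Dac08}) applies to the variable $T(\nabla u_k)$ with the a.e.-convergent parameter $u_k$, and gives
\[
\liminf_{k\to\infty}\int_\Omega F\big(x,u_k(x),T(\nabla u_k(x))\big)\,dx
\ge \int_\Omega F\big(x,u(x),T(\nabla u(x))\big)\,dx,
\]
which is exactly the asserted sequential weak lower semicontinuity.

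The hard part will be the weak continuity of the higher-order minors in the second step: the maps $\mathrm{adj}_{s}$ are genuinely nonlinear, so the fact that they commute with weak limits is nontrivial and rests entirely on the divergence (null-Lagrangian) structure together with the integrability accounting. The strict inequality $p>N\wedge n$ is precisely what keeps every minor in an $L^{p/s}$ space with $p/s>1$, and thereby sidesteps the delicate borderline case $p=N\wedge n$, which would require finer measure-theoretic tools such as biting-lemma arguments.
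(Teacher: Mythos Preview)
Your proof sketch is correct and follows the standard route: weak continuity of minors via the null-Lagrangian structure, combined with Ioffe-type lower semicontinuity for convex integrands. Note, however, that the paper does not actually prove this lemma; it merely records it as a special case of Theorem~8.16 in \cite{Dac08}. Your outline is precisely the argument underlying that reference, so there is no discrepancy in approach---you have simply spelled out what the paper defers to the literature.
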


\section{Image registration} \label{sec:registration}

In this section we treat the image registration problem from an inverse problems perspective. First, by applying Theorem \ref{thm:welldef} we show that minimization of
$$ \|I_2 \circ u - I_1 \|^q + \alpha \mathcal{R}(u), $$
where $\mathcal{R}$ is a first order functional with polyconvex integrand, constitutes a well-defined regularization method. Second, we highlight a particular situation where, in spite of $\mathcal{R}$ being nonconvex, Theorem \ref{thm:convrates} applies as well.

Let $\Omega \subset \mathbb{R}^n$ be a bounded open set with Lipschitz boundary. Given a target image $I_1:\Omega \to \mathbb{R}$ and a reference image $I_2:\Omega \to \mathbb{R}$ the model equation for the image registration problem reads
$$ I_2 \circ u = I_1,$$
where $u:\Omega \to \mathbb{R}^n$ is an unknown deformation of the image domain. We interpret this as a particular instance of the abstract operator equation \eqref{eq:ip}. Thus $K$ is the composition operator that sends every deformation $u$ to the deformed reference image $I_2\circ u = K(u)$. Note that in Section \ref{sec:regularization} we have implicitly assumed that the operator be known exactly. Therefore, $I_2$ is known exactly, whereas the exact target image $I_1^\dagger$, i.e.~the exact data, is available only as noisy measurements $I_1^\delta$.

\begin{prop}\label{thm:registration} Let $p>n$ and $q\ge 1$. Endow $U=W^{1,p}(\Omega,\mathbb{R}^n)$, with its weak and $V=L^q(\Omega)$ with its strong topology. Assume $I_2\in C^0(\bar{\Omega})$ and define the operator
$$ K: U \to V, \quad u \mapsto K(u) = I_2\circ u$$
with domain $\mathcal{D}(K) = \{u\in U : u(\Omega) \subset \bar{\Omega}\}$. Let
$$ F : \Omega \times \mathbb{R}^n \times \mathbb{R}^{\tau(n,n)} \to [0,+\infty]$$
be a Carath\'eodory function such that, for almost every $x\in \Omega$ and every $u\in \mathbb{R}^n$, the map $\xi \mapsto F(x,u,\xi)$ is convex and
\begin{equation}\label{eq:coercivity}
	F(x,u,T(A)) \ge |A|^p
\end{equation}
holds for every $A\in \mathbb{R}^{n\times n}$. For $u\in U$ define
$$ \mathcal{R}(u) = \int_\Omega F(x,u(x),T(\nabla u(x)))\, dx $$
and assume that $\mathrm{dom}\, \mathcal{R} \cap \mathcal{D}(K)$ is not empty. Then, minimization of
$$ \mathcal{T}_\alpha(u;I^\delta_1) = \|K(u) - I_1^\delta\|^q_{L^q(\Omega)} + \alpha \mathcal{R}(u), \quad \alpha > 0,$$
is a well-defined regularization method in the sense of Theorem \ref{thm:welldef}.
\end{prop}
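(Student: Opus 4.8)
The plan is to verify the six hypotheses of Theorem~\ref{thm:welldef} for the choice $U = W^{1,p}(\Omega,\mathbb{R}^n)$ equipped with its weak topology and $V = L^q(\Omega)$ with its strong topology. The single tool that drives almost everything is the Morrey embedding: because $p > n$, the space $W^{1,p}(\Omega,\mathbb{R}^n)$ embeds compactly into $C^0(\bar\Omega,\mathbb{R}^n)$ (here $\Omega$ having Lipschitz boundary is used), so that weak convergence $u_k \rightharpoonup u$ in $U$ automatically upgrades to uniform convergence $u_k \to u$ on $\bar\Omega$.

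First I would settle precompactness of the sublevel sets (hypothesis~1). Since $\mathcal{T}_\alpha(\cdot;I_1^\dagger)$ is finite only on $\mathcal{D}(K)\cap\mathrm{dom}\,\mathcal{R}$, any $u$ in a sublevel set $\{\mathcal{T}_\alpha \le C\}$ satisfies both $u(\Omega)\subset\bar\Omega$ and, by the coercivity estimate~\eqref{eq:coercivity}, $\|\nabla u\|_{L^p}^p \le \mathcal{R}(u) \le C/\alpha$. The constraint $u(\Omega)\subset\bar\Omega$ bounds $\|u\|_{L^\infty}$ by $\sup_{y\in\bar\Omega}|y|<\infty$ and hence bounds $\|u\|_{L^p}$; together with the gradient bound this gives a uniform bound on $\|u\|_{W^{1,p}}$. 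As $p>1$, $W^{1,p}$ is reflexive, so such bounded sets are sequentially weakly precompact.

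Hypothesis~2, lower semicontinuity of the $L^q$-norm in the strong topology, is immediate, and hypothesis~3, sequential weak lower semicontinuity of $\mathcal{R}$, is exactly Lemma~\ref{thm:lsc}, which applies because $F$ is a nonnegative Carath\'eodory function, convex in its last argument, and $p>n=N\wedge n$. The substantive work is hypothesis~4. For sequential continuity of $K$, I take $u_k\rightharpoonup u$ inside a sublevel set; Morrey gives $u_k\to u$ uniformly, and since each $u_k\in\mathcal{D}(K)$, i.e.\ $u_k(\Omega)\subset\bar\Omega$, with $\bar\Omega$ closed, the uniform limit obeys $u(\Omega)\subset\bar\Omega$, so $u\in\mathcal{D}(K)$. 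Uniform continuity of $I_2$ on the compact set $\bar\Omega$ then yields $I_2\circ u_k\to I_2\circ u$ uniformly, hence in $L^q$, which is the required continuity from the weak topology of $U$ to the strong topology of $V$. Sequential closedness of the sublevel set follows by combining this with the weak lower semicontinuity of $\mathcal{R}$, passing to the limit in $\mathcal{T}_\alpha(u;I_1^\dagger)\le\liminf_k\mathcal{T}_\alpha(u_k;I_1^\dagger)\le C$. Finally, hypotheses~5 and~6 are not properties of the functional: one assumes the inverse problem admits an exact solution $u^\dagger$ with $I_2\circ u^\dagger=I_1^\dagger$, and any admissible parameter choice rule may be fixed. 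Theorem~\ref{thm:welldef} then delivers existence, stability and convergence.

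I expect the only genuinely delicate point to be hypothesis~4, namely the interplay between the composition operator and the domain constraint. The key realization is that weak $W^{1,p}$-convergence must be promoted to uniform convergence via the compact Morrey embedding, both to keep the constraint $u(\Omega)\subset\bar\Omega$ stable under passage to the limit and to compose the merely continuous $I_2$ continuously. By contrast, the coercivity argument for hypothesis~1 is routine once one notices that the domain constraint $u(\Omega)\subset\bar\Omega$ supplies exactly the $L^\infty$-control that~\eqref{eq:coercivity}, bounding only the gradient, cannot provide on its own.
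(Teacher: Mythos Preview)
Your proposal is correct and follows essentially the same argument as the paper: both verify hypotheses~1--4 of Theorem~\ref{thm:welldef} by using the coercivity bound~\eqref{eq:coercivity} together with the domain constraint $u(\Omega)\subset\bar\Omega$ for precompactness, Lemma~\ref{thm:lsc} for weak lower semicontinuity, and the compact Morrey embedding $W^{1,p}\hookrightarrow C^0$ to obtain both stability of $\mathcal{D}(K)$ under weak limits and sequential continuity of the composition operator. Your write-up is in fact slightly more explicit than the paper's in a few places (e.g.\ invoking uniform continuity of $I_2$ on the compact set $\bar\Omega$, and spelling out why the constraint $u(\Omega)\subset\bar\Omega$ passes to uniform limits), but the structure and key ideas are identical.
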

\begin{proof}
We show that all the assumptions of Theorem \ref{thm:welldef} are satisfied.

Item 2 is obviously true.

Item 3 follows from Lemma \ref{thm:lsc}.

Concerning Item 1 let $\alpha,M >0$, $I_1 \in V$ and $(u_k)\subset U$ with $\mathcal{T}_\alpha(u_k;I_1) \le M$ for $k\ge 1$. Then, in particular, $(u_k) \subset \mathcal{D}(K)$ and therefore $u_k(\Omega) \subset \bar\Omega$ for all $k$. Since $\Omega$ is bounded, the sequence $(u_k)$ is bounded in $L^p(\Omega,\mathbb{R}^n)$. The lower bound \eqref{eq:coercivity} on $F$ yields boundedness of $(u_k)$ in $U$. Since $p>1$, $U$ is reflexive and $(u_k)$ has a weakly convergent subsequence. Thus, the sublevel sets of $\mathcal{T}_\alpha(\cdot;I_1)$ are weakly sequentially precompact.

Concerning Item 4 let again $\mathcal{T}_\alpha(u_k;I_1) \le M$ for $k\ge 1$ and assume $u_k \rightharpoonup \bar u$ for some $\bar u \in U$. The compact embedding of $U$ into $C^0(\bar \Omega,\mathbb{R}^n)$ implies that $u_k \to \bar u$ uniformly and $\bar u\in \mathcal{D}(K)$. Since $I_2 \in C^0(\bar \Omega)$, the sequence $(I_2\circ u_k)$ converges uniformly to $I_2\circ \bar u$ and, because $\Omega$ is bounded, it also converges in $L^q(\Omega)$. Finally, continuity of $\|\cdot\|^q_{L^q}$ and weak lower semicontinuity of $\mathcal{R}$ gives
$$ \mathcal{T}_\alpha(\bar u;I_1) \le \liminf_{k\to \infty}  \mathcal{T}_\alpha(u_k;I_1) \le M.$$
Thus we have shown that the sublevel sets of $\mathcal{T}_\alpha(\cdot;I_1)$ are weakly sequentially closed and that $K$ is weak-strong sequentially continuous there.
\end{proof}

As already pointed out in Remark \ref{rmk:applicability}, while Theorem \ref{thm:convrates} is in principle applicable now, in most cases, due to the nonconvexity of $\mathcal{R}$, it is unlikely to actually apply. Below, we use the idea from Remark \ref{rmk:perfect} to construct an instance of a registration problem where Theorem \ref{thm:convrates} does apply however.

\begin{exmp}\label{exmp:main} Let $n=2$. Assume that $I_2$ is a rotated version of the exact data $I_1^\dagger$. That is, there is a deformation $u_R$, given by $u_R(x) = Rx$ for some $R \in SO(2)$, such that $\|I_2 \circ u_R - I^\dagger_1\|_{L^q(\Omega)} = 0$. Of course, $u_R$ must lie in $\mathcal{D}(K)$, which in this case translates to $\Omega$ being invariant with respect to the rotation $R$. We now construct a regularization functional $\mathcal{R}$ which not only satisfies all requirements from Proposition \ref{thm:registration} but which is also minimal for rotations. It then follows that $0\in \partial \mathcal{R}(u_R)$ and Theorem \ref{thm:convrates} applies.

For $u \in W^{1,p}(\Omega,\mathbb{R}^2)$, $p>2$, we define
$$ \mathcal{R}(u) = \int_\Omega f(\nabla u(x))\, dx,$$
where
$$f(A) = \mathrm{tr}\Big[(A^\top A)^{p/2}\Big] + pe^{1-\det A}$$
for all $A \in \mathbb{R}^{2 \times 2}$. This particular choice of integrand is inspired by the tangential distortion energy from \cite{IglRumSch15_report}. The identity deformation lies in the set $\mathrm{dom}\,\mathcal{R} \cap \mathcal{D}(K)$. Hence it is nonempty. That $f$ is not convex is easy to see as well. The coercivity estimate \eqref{eq:coercivity} follows from
$$ f(A) \ge \mathrm{tr}\Big[(A^\top A)^{p/2}\Big] = \lambda_1^p + \lambda_2^p \ge c(\lambda_1^2 + \lambda_2^2)^{p/2} = c |A|^p.$$
Here $\lambda_1, \lambda_2$ are the singular values of $A$ and $c>0$ is a constant whose existence is guaranteed by the equivalence of norms in finite dimensions. Convexity of the maps $x\mapsto e^{1-x}$ and $A\mapsto\lambda_1^p + \lambda_2^p$ (cf.\ \cite[Lemma 3.11]{Ped00}) yields polyconvexity of $f$. To verify minimality on $SO(2)$ it is convenient to rewrite $f$ in terms of its signed singular values $\mu_1=\mathrm{sgn}(\det A) \lambda_1$, $\mu_2=\lambda_2$:
$$ f(A) = |\mu_1|^p + \mu_2^p + p e^{1-\mu_1 \mu_2}.$$
Now minimality on $SO(2)$ translates to minimimality for $\mu_1=\mu_2=1$, which is easy to check.

The fact that $f$ only depends on signed singular values is actually equivalent to $f$ being $SO(2)\times SO(2)$ invariant, which is a desirable property in itself. See \cite[Sec.~5.3.3]{Dac08} for more details.
\end{exmp}

\section{Generalized Bregman distances for\\functionals with polyconvex integrands} \label{sec:main}
Following Grasmair's approach from \cite{Gra10b} we introduce an ``extended dual space" $W_{\!\mathrm{poly}}$ for the Sobolev space $W^{1,p}(\Omega,\mathbb{R}^N)$. The set $W_{\!\mathrm{poly}}$ is chosen such that we can prove a $W_{\!\mathrm{poly}}$-subdifferentiability result, similar to Lemma \ref{thm:subdif3}, for a certain class of functionals with polyconvex integrands.

\begin{dfn}
Let $\Omega \subset \mathbb{R}^n$ be open. For $p\in [1,\infty)$ set $U=W^{1,p}(\Omega,\mathbb{R}^N)$. Recall the notation from Section \ref{sec:polyconvex} and observe that for $u \in U$ by H\"{o}lder's inequality we have
\begin{equation}\label{eq:S}
	T_2(\nabla u) \in \prod_{s=2}^{N \wedge n} L^{\frac{p}{s}} (\Omega, \mathbb{R}^{\sigma(s)}) \eqqcolon S_2.
\end{equation}
Therefore, we let $W_{\!\mathrm{poly}}$ be the set of all functions $w : U \to \mathbb{R}$ for which there is a pair $(u^*, v^*) \in U^* \times S_2^*$ such that
\begin{equation*}
	w(u) = \langle u^*,u\rangle_{U^*,U} + \langle v^*, T_2(\nabla u) \rangle_{S_2^*,S_2}
\end{equation*}
for all $u\in U$.
\end{dfn}
\begin{rmk}
The set $W_{\!\mathrm{poly}}$ is simply the dual of $U \times S_2$ with its elements acting nonlinearly on $U$ instead of linearly on $U\times S_2$.
\end{rmk}
\begin{rmk}
Identifying $u^* \in U^*$ with $(u^*,0) \in W_{\!\mathrm{poly}}$ we can regard $W_{\!\mathrm{poly}}$ as a superset of $U^*$. Hence the generalized subdifferential
$$\partial_{\mathrm{poly}} \mathcal{R}(u) =
\begin{cases}
		\{w \in W_{\!\mathrm{poly}} : \mathcal{R}(v) \ge \mathcal{R}(u) + w(v) - w(u) \text{ for all } v \in U\}, & \mathcal{R}(u) \in \mathbb{R} \\
		\emptyset, & \mathcal{R}(u) \notin \mathbb{R}.
	\end{cases}
$$
can be regarded as a superset of the classical one $\partial \mathcal{R}(u)$ for all functionals $\mathcal{R}:U \to \mathbb{R}\cup\{\pm\infty\}$ and $u\in U$.
\end{rmk}
\begin{rmk}
Let $\bar u \in \mathrm{dom}\, \mathcal{R}$, $u\in U$ and $w=(u^*,v^*)\in \partial_{\mathrm{poly}} \mathcal{R}(\bar u)$. The associated $W_{\!\mathrm{poly}}$-Bregman distance can be written as
\begin{align*}
	D_w^{\mathrm{poly}}(u;\bar u) 
		&= \mathcal{R}(u) - \mathcal{R}(\bar u) - w(u) + w(\bar u) \\
		&= \mathcal{R}(u) - \mathcal{R}(\bar u) - \langle u^*, u - \bar u \rangle_{U^*,U} - \langle v^*, T_2(\nabla u) + T_2(\nabla \bar u) \rangle_{S_2^*,S_2}.
\end{align*}
Note that the first three terms in the second line correspond to the classical Bregman distance at $(\bar u, u^*)$, but their sum can be negative now, since $u^* \notin \partial \mathcal{R}(\bar u)$ in general.
\end{rmk}
The following statement justifies our definition of $W_{\!\mathrm{poly}}$ and is analogous to Lemma \ref{thm:subdif3}.
\begin{lem} \label{thm:wsubdif}
Let $\Omega \subset \mathbb{R}^n$ be an open set and let
	$$F:\Omega \times \mathbb{R}^N \times \mathbb{R}^{\tau(N, n)} \to \mathbb{R} \cup \{+ \infty\}$$
be a Carath\'eodory function. Assume that, for almost every $x\in \Omega$, the map $(u,\xi) \mapsto F(x,u,\xi)$ is convex and differentiable throughout its effective domain. Let $p\in[1,\infty)$ and define the following functional on $U =  W^{1,p}(\Omega,\mathbb{R}^N)$
	$$ \mathcal{R}(u) = \int_\Omega F(x,u(x),T(\nabla u(x))) \, dx.$$
If $\mathcal{R}(\bar{v}) \in \mathbb{R}$ and the function $x \mapsto \nabla_{u,\xi} F (x,\bar{v}(x),T(\nabla \bar{v}(x)))$ lies in
\begin{equation}\label{eq:Sspace}
	L^{p^*}(\Omega,\mathbb{R}^N) \times \prod_{s=1}^{N \wedge n} L^{(\frac{p}{s})^*} (\Omega, \mathbb{R}^{\sigma(s)}),
\end{equation}
then this function is a $W_{\!\mathrm{poly}}$-subgradient of $\mathcal{R}$ at $\bar v$.
\end{lem}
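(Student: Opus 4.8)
The plan is to exhibit the asserted function as a genuine element $w=(u^*,v^*)\in U^*\times S_2^*\subset W_{\!\mathrm{poly}}$ and then verify the defining inequality of $\partial_{\mathrm{poly}}\mathcal{R}(\bar v)$ directly from the pointwise convexity of $F$. First I would split the gradient according to the block structure of $T=(T_1,T_2)$, where $T_1(\nabla u)=\nabla u$. Writing $\nabla_{u,\xi}F=(\nabla_u F,\nabla_{\xi_1}F,\dots,\nabla_{\xi_{N\wedge n}}F)$ with all quantities evaluated at $(x,\bar v(x),T(\nabla\bar v(x)))$, I let $u^*\in U^*$ be the functional identified, as in Section~\ref{sec:Bregman}, with the pair $(\nabla_u F,\nabla_{\xi_1}F)\in L^{p^*}(\Omega,\mathbb{R}^N)\times L^{p^*}(\Omega,\mathbb{R}^{N\times n})$, and I let $v^*=(\nabla_{\xi_s}F)_{s=2}^{N\wedge n}\in\prod_{s=2}^{N\wedge n}L^{(\frac{p}{s})^*}(\Omega,\mathbb{R}^{\sigma(s)})=S_2^*$. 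The hypothesis that $x\mapsto\nabla_{u,\xi}F$ lies in the space \eqref{eq:Sspace} is exactly what guarantees $u^*\in U^*$ and $v^*\in S_2^*$, so that $w(u)=\langle u^*,u\rangle_{U^*,U}+\langle v^*,T_2(\nabla u)\rangle_{S_2^*,S_2}$ is a well-defined member of $W_{\!\mathrm{poly}}$.

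For the inequality itself I would fix $u\in U$; if $\mathcal{R}(u)=+\infty$ the claim is trivial. Otherwise, since $\mathcal{R}(\bar v)\in\mathbb{R}$ forces $(\bar v(x),T(\nabla\bar v(x)))$ into the effective domain of $F(x,\cdot,\cdot)$ for a.e.\ $x$, I can apply the supporting-hyperplane inequality for the convex, differentiable map $(u,\xi)\mapsto F(x,u,\xi)$ at that point, evaluated at $(u(x),T(\nabla u(x)))$:
\[
F(x,u,T(\nabla u)) - F(x,\bar v,T(\nabla\bar v)) \ge \nabla_u F\cdot(u-\bar v) + \sum_{s=1}^{N\wedge n}\nabla_{\xi_s}F\cdot\big(\mathrm{adj}_s(\nabla u)-\mathrm{adj}_s(\nabla\bar v)\big),
\]
valid a.e. The crucial integrability is supplied by H\"older's inequality: $u-\bar v\in L^p$ pairs with $\nabla_u F\in L^{p^*}$, $\nabla u-\nabla\bar v\in L^p$ pairs with $\nabla_{\xi_1}F\in L^{p^*}$, and for $s\ge 2$ the observation \eqref{eq:S} that $\mathrm{adj}_s(\nabla u)\in L^{p/s}$ pairs it with $\nabla_{\xi_s}F\in L^{(\frac{p}{s})^*}$, so the entire right-hand side is integrable. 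Integrating over $\Omega$ and regrouping the $s=1$ contribution with the $\nabla_u F$ term into $\langle u^*,u-\bar v\rangle_{U^*,U}$ and the $s\ge 2$ contributions into $\langle v^*,T_2(\nabla u)-T_2(\nabla\bar v)\rangle_{S_2^*,S_2}$ yields
\[
\mathcal{R}(u)-\mathcal{R}(\bar v) \ge \langle u^*,u-\bar v\rangle_{U^*,U}+\langle v^*,T_2(\nabla u)-T_2(\nabla\bar v)\rangle_{S_2^*,S_2}=w(u)-w(\bar v),
\]
which is precisely the condition $w\in\partial_{\mathrm{poly}}\mathcal{R}(\bar v)$.

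The computational heart is routine once the bookkeeping is in place, so I expect the main obstacle to be purely measure-theoretic: justifying that the pointwise gradient inequality may be integrated term by term and that $\int_\Omega F(x,u,T(\nabla u))\,dx$ is well-defined in $(-\infty,+\infty]$ even when $\mathcal{R}(u)=+\infty$. This is settled by noting that the pointwise inequality minorizes $F(x,u,T(\nabla u))$ by the sum of $F(x,\bar v,T(\nabla\bar v))$, which is integrable since $\mathcal{R}(\bar v)\in\mathbb{R}$, and the H\"older-integrable linear terms; hence its negative part is integrable and the integration is legitimate. The conceptual point worth emphasizing is that, unlike in Lemma~\ref{thm:subdif3}, the integrand $A\mapsto F(x,u,T(A))$ is in general \emph{not} convex, so the classical result does not apply. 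What makes the argument go through is that we never differentiate in $A$ but only in the extended variable $\xi$ ranging over all minors, treating $T_2(\nabla u)$ as an independent coordinate in the linear space $S_2$; this is exactly the role played by the enlarged space $W_{\!\mathrm{poly}}$.
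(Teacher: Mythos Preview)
Your argument is correct and follows essentially the same route as the paper's proof: apply the pointwise subgradient inequality for the convex map $(u,\xi)\mapsto F(x,u,\xi)$ at $(\bar v(x),T(\nabla\bar v(x)))$, then integrate using the H\"older pairings dictated by \eqref{eq:Sspace}. Your version is in fact somewhat more careful than the paper's, as you explicitly identify the element $w=(u^*,v^*)\in W_{\!\mathrm{poly}}$ and justify the integrability of the negative part; the paper leaves these points implicit.
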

\begin{proof}
For almost every $x \in \Omega$ the map $(u,\xi) \mapsto F(x,u,\xi)$ is subdifferentiable throughout its effective domain. Therefore, for every $(v,\zeta) \in \mathrm{dom}\,F(x,\cdot,\cdot)$ we have
	\begin{align*}
		F(x,w,\eta)	&\ge F(x,v,\zeta) + \nabla_{u,\xi} F (x,v,\zeta) \cdot (w-v,\eta-\zeta) \\
					&= 	F(x,v,\zeta) + \nabla_{u} F (x,v,\zeta) \cdot (w-v) + \nabla_{\xi} F (x,v,\zeta) \cdot (\eta-\zeta)
	\end{align*}
for all $(w,\eta) \in \mathbb{R}^{N}\times \mathbb{R}^{\tau(N,n)}$. In particular, for functions $\bar{v},v\in U$, $\mathcal{R}(\bar{v}) \in \mathbb{R}$, we get
	\begin{align*}
		F(x,v(x),T(\nabla v(x))) &\ge F(x,\bar{v}(x),T(\nabla \bar{v} (x)))  \\
			& \quad {}+ \nabla_{u} F (x,\bar{v}(x),T(\nabla \bar{v}(x))) \cdot (v(x)-\bar{v}(x)) \\
		 	& \quad {}+ \nabla_\xi F (x,\bar{v}(x),T(\nabla \bar{v} (x))) \cdot (T(\nabla v(x))-T(\nabla \bar{v}(x)))
	\end{align*}
for almost every $x \in \Omega$. Integration over $\Omega$ gives
	\begin{align*}
	\mathcal{R}(v) &\ge \mathcal{R}(\bar{v}) + \int_\Omega \big[ \nabla_{u} F (x,\bar{v}(x),T(\nabla \bar{v} (x))) \cdot (v(x)-\bar{v}(x)) \\
		& \quad{}+ \nabla_{\xi} F (x,\bar{v}(x),T(\nabla \bar{v} (x))) \cdot (T(\nabla v(x))-T(\nabla \bar{v}(x)))\, \big] dx.
	\end{align*}
The integral on the right hand side is well-defined, if the functions
\begin{align*}
	x &\mapsto \nabla_{u} F (x,\bar{v}(x),T(\nabla \bar{v}(x))) \quad \text{and} \\
	x &\mapsto \nabla_{\xi} F (x,\bar{v}(x),T(\nabla \bar{v}(x)))
\end{align*}
lie in the right Lebesgue spaces. Considering that $v-\bar{v} \in L^p$ and
	$$T(\nabla v)-T(\nabla \bar{v}) \in \prod_{s=1}^{N \wedge n} L^{\frac{p}{s}} (\Omega, \mathbb{R}^{\sigma(s)}),$$
the right spaces are just given by \eqref{eq:Sspace}.
\end{proof}

\begin{exmp}
Let $N=n=2$. Then $T(A) = (A, \det A)$ for $A\in \mathbb{R}^{2 \times 2}$. Define an integrand by $F(x,u,A,\det A) = F(\det A)= (\det A)^2$. If $p=4$, then for all $u \in U = W^{1,4}(\Omega,\mathbb{R}^2)$ we have
	$$ \mathcal{R}(u) = \int_\Omega (\det \nabla u(x) )^2 \, dx \in \mathbb{R}$$
and the function $x\mapsto F'(\det \nabla u(x)) = 2 \det \nabla u(x) $ lies in $L^2(\Omega)$. By Lemma \ref{thm:wsubdif} functional $\mathcal{R}$ is $W_{\!\mathrm{poly}}$-subdifferentiable everywhere.
\end{exmp}

\begin{exmp}
Let $p > N = n \ge 2$, $q > 1$, and let $\Omega \subset \mathbb{R}^n$ be bounded. Consider the integrand $F(x,u,T(A)) = F(A, \det A) = |A|^p/p + |\det A|^q/q$. If $\bar v \in W^{1,\infty}(\Omega,\mathbb{R}^n)$, then clearly $\mathcal{R}(\bar v) \in \mathbb{R}$. In addition
	$$x\mapsto \nabla_\xi F(\nabla \bar v (x), \det \nabla \bar v (x)) = (|\nabla \bar v (x)|^{p-2}\nabla \bar v (x), |\det \nabla \bar v (x)|^{q-2}\det \nabla \bar v (x))$$
lies in $L^\infty$. Therefore, $\mathcal{R}$ has a $W_{\!\mathrm{poly}}$-subgradient everywhere on $W^{1,\infty}(\Omega,\mathbb{R}^n) \subset U = W^{1,p}(\Omega,\mathbb{R}^n)$, which implies that the associated $W_{\!\mathrm{poly}}$-Bregman distance is defined on a dense subset of $U$. In addition, the functional satisfies the coercivity estimate \eqref{eq:coercivity} and is weakly lower semicontinuous in $W^{1,p}$ according to Lemma \ref{thm:lsc}. Thus, at least from a theoretical perspective, the functional $\mathcal{R}$ is well-suited for regularizing inverse problems with $\mathbb{R}^n$-valued unknowns. Since $\mathcal{R}$ is not convex, however, it is not covered by most of existing regularization theory \cite{SchGraGroHalLen09,SchuKalHofKaz12}.
\end{exmp}

The next theorem shows that standard convergence rates results can be carried over to the $W_{\!\mathrm{poly}}$-Bregman distance. Its proof is analogous to that of Theorem \ref{thm:convrates} and therefore omitted.

\begin{thm}\label{thm:polyconvrates}
	Let the assumptions of Theorem \ref{thm:welldef} hold with $U = W^{1,p}(\Omega,\mathbb{R}^N)$. In addition assume that $\mathcal{R}$ has a $W_{\!\mathrm{poly}}$-subgradient $w$ at an $\mathcal{R}$-minimizing solution $u^\dagger$ and that there are constants $\beta_1 \in [0,1)$ and $\bar\alpha, \beta_2, \rho > 0$ satisfying $\bar\alpha\mathcal{R}(u^\dagger) < \rho$ such that
	\begin{equation*}
		w(u^\dagger) - w(u) \le \beta_1 D^{\mathrm{poly}}_w(u;u^\dagger) + \beta_2 \| K(u) - v^\dagger\|
	\end{equation*}
	holds for all $u$ with $\mathcal{T}_{\bar \alpha} (u;v^\dagger) \le \rho$.
	
	If $p>1$, assume $\alpha(\delta) \sim \delta^{p-1}$. Then
	\begin{equation*}
		D^{\mathrm{poly}}_w (u^\delta_{\alpha};u^\dagger) = O(\delta) \quad \text{and} \quad \|K(u^\delta_{\alpha}) - v^\delta\| 	= O(\delta).
	\end{equation*}
	
	If $p=1$, assume $\alpha(\delta) \sim \delta^\epsilon$ for $\epsilon \in [0,1)$. If $\epsilon = 0$, additionally assume that $0 < \alpha(\delta) \beta_2 < 1$. Then
	\begin{equation*}
		D^{\mathrm{poly}}_w(u^\delta_{\alpha};u^\dagger) = O(\delta^{1-\epsilon}) \quad \text{and} \quad \|K(u^\delta_{\alpha}) - v^\delta\| 	= O(\delta).
	\end{equation*}
\end{thm}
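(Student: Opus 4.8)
The plan is to run, essentially verbatim, the argument behind Theorem \ref{thm:convrates}, exploiting the fact that that argument never uses linearity of the subgradient $u^*$ but only two structural facts: the algebraic identity defining the Bregman distance and the source condition. Both have exact analogues in the $W_{\!\mathrm{poly}}$ setting, with the nonlinear expression $w(v)-w(u)$ playing the role of $\langle u^*, v-u\rangle_{U^*,U}$. Concretely, I would fix a minimizer $u_\alpha^\delta \in \argmin \mathcal{T}_\alpha(\cdot; v^\delta)$ and a $W_{\!\mathrm{poly}}$-subgradient $w \in \partial_{\mathrm{poly}}\mathcal{R}(u^\dagger)$, and treat $D^{\mathrm{poly}}_w$ as a black box that is nonnegative by construction and satisfies the rearrangement $\mathcal{R}(u) = \mathcal{R}(u^\dagger) + D^{\mathrm{poly}}_w(u;u^\dagger) + w(u) - w(u^\dagger)$ of \eqref{eq:wbregman}. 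Throughout, $q$ denotes the exponent of the data-fidelity term fixed in Theorem \ref{thm:welldef}.

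First I would use minimality of $u_\alpha^\delta$ together with $K(u^\dagger)=v^\dagger$ and $\norm{v^\dagger - v^\delta}\le\delta$ to obtain $\norm{K(u_\alpha^\delta) - v^\delta}^q + \alpha \mathcal{R}(u_\alpha^\delta) \le \delta^q + \alpha \mathcal{R}(u^\dagger)$. Substituting the Bregman identity for $\mathcal{R}(u_\alpha^\delta)$ and cancelling $\alpha\mathcal{R}(u^\dagger)$ leaves
\begin{equation*}
	\norm{K(u_\alpha^\delta) - v^\delta}^q + \alpha D^{\mathrm{poly}}_w(u_\alpha^\delta;u^\dagger) \le \delta^q + \alpha\big(w(u^\dagger) - w(u_\alpha^\delta)\big).
\end{equation*}
Before invoking the source condition I must check that $u_\alpha^\delta$ lies in the sublevel set $\{u : \mathcal{T}_{\bar\alpha}(u;v^\dagger)\le\rho\}$ on which it is assumed to hold; this follows for all sufficiently small $\delta$ from the a priori bound above, which forces $\norm{K(u_\alpha^\delta)-v^\dagger}\to 0$ and $\mathcal{R}(u_\alpha^\delta)\le\mathcal{R}(u^\dagger)+\delta^q/\alpha$, combined with the strict inequality $\bar\alpha\mathcal{R}(u^\dagger)<\rho$. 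Applying the source condition to bound $w(u^\dagger)-w(u_\alpha^\delta)$, then the triangle inequality $\norm{K(u_\alpha^\delta)-v^\dagger}\le\norm{K(u_\alpha^\delta)-v^\delta}+\delta$, and absorbing the $\beta_1$-term on the left (using $\beta_1<1$) yields
\begin{equation*}
	\norm{K(u_\alpha^\delta)-v^\delta}^q + \alpha(1-\beta_1) D^{\mathrm{poly}}_w(u_\alpha^\delta;u^\dagger) \le \delta^q + \alpha\beta_2\norm{K(u_\alpha^\delta)-v^\delta} + \alpha\beta_2\delta.
\end{equation*}

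From here the two cases split exactly as in Theorem \ref{thm:convrates}. For $q>1$ I would apply Young's inequality to the mixed term, $\alpha\beta_2\norm{K(u_\alpha^\delta)-v^\delta}\le\frac12\norm{K(u_\alpha^\delta)-v^\delta}^q + C(q,\beta_2)\,\alpha^{q^*}$ with $q^*=q/(q-1)$, absorb the first summand on the left, and insert $\alpha\sim\delta^{q-1}$; then $\alpha^{q^*}\sim\delta^q$ and $\alpha\delta\sim\delta^q$, so the right-hand side is $O(\delta^q)$. This gives $\norm{K(u_\alpha^\delta)-v^\delta}=O(\delta)$ directly and $D^{\mathrm{poly}}_w(u_\alpha^\delta;u^\dagger)=O(\delta^q/\alpha)=O(\delta)$. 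For $q=1$ no Young step is needed: moving the mixed term to the left gives $(1-\alpha\beta_2)\norm{K(u_\alpha^\delta)-v^\delta}+\alpha(1-\beta_1)D^{\mathrm{poly}}_w(u_\alpha^\delta;u^\dagger)\le\delta(1+\alpha\beta_2)$, which with $\alpha\sim\delta^\epsilon$ (and $\alpha\beta_2<1$ when $\epsilon=0$) yields the data-term rate $O(\delta)$ and $D^{\mathrm{poly}}_w(u_\alpha^\delta;u^\dagger)=O(\delta/\alpha)=O(\delta^{1-\epsilon})$.

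I do not expect a genuine obstacle, and this is exactly the value of Grasmair's framework: the only places $w$ enters are the subgradient inequality (equivalent to nonnegativity of $D^{\mathrm{poly}}_w$ and to the Bregman identity) and the source condition, neither of which requires $w$ to be linear. Consequently the nonlinear dependence of $w$ on $T_2(\nabla u)$ is immaterial to the estimates and the argument transfers unchanged. The only points needing mild care are the verification that the regularized solutions eventually enter the sublevel set where the source condition is posed, and the standard bookkeeping in Young's inequality; both are routine, which is why the authors omit the proof.
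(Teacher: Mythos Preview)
Your proposal is correct and is precisely the approach the paper indicates: the authors explicitly omit the proof, stating it is analogous to that of Theorem~\ref{thm:convrates}, and you have supplied exactly that analogy, replacing $\langle u^*, u^\dagger - u\rangle_{U^*,U}$ by $w(u^\dagger)-w(u)$ throughout and noting that linearity of the subgradient is never used. The only minor remark is that the statement of Theorem~\ref{thm:polyconvrates} writes $p$ where Theorem~\ref{thm:convrates} has $q$ (the data-fidelity exponent); your choice to run the argument in terms of $q$ is the right one.
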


\begin{rmk}
Note that Theorem \ref{thm:polyconvrates} does not require $\mathcal{R}$ to have a polyconvex integrand, just as Theorem \ref{thm:convrates} does not require $\mathcal{R}$ to be convex.
\end{rmk}

\begin{rmk}
Another strategy for extending Bregman distances to functionals with polyconvex integrands could consist in lifting the functionals from $U$ to a product space $\tilde U$: Let $F$ be an integrand as in Lemma \ref{thm:lsc} and define
$$\mathcal{R}(u) = \int_\Omega F(x,u(x),T(\nabla u(x)))\,dx$$
on $U$. Now we can, for example define the lifted functional
$$\tilde{\mathcal{R}}(u,v) = \int_\Omega F(x,u(x),(\nabla u(x), v(x)))\,dx$$
on $\tilde U = U \times S_2$. It satisfies $\tilde{\mathcal{R}}(u,T_2(\nabla u)) = \mathcal{R}(u)$ for all $u\in U$. Adapting source conditions \eqref{eq:sourcecon} we could measure convergence rates with $D_{\tilde{u}^*}(u^\delta_\alpha; u^\dagger)$ where $\tilde{u}^* \in \partial \tilde{\mathcal{R}}(u^\dagger,T_2(\nabla u^\dagger))$. The following inclusion is straightforward
$$\partial \tilde{\mathcal{R}}(u,T_2(\nabla u)) \subset \partial_{\mathrm{poly}} \mathcal{R}(u).$$
In contrast to $\partial_{\mathrm{poly}} \mathcal{R}(u)$, however, there is no obvious way in which $\partial \tilde{\mathcal{R}}(u,T_2(\nabla u))$ could be viewed as an extension of $\partial \mathcal{R}(u)$.
\end{rmk}

\section{Conclusion}
Convexity is an unnecessarily strong requirement for functionals $\mathcal{R}$ defined on $W^{1,p}(\Omega,\mathbb{R}^N)$, if the main concern is to ensure weak lower semicontinuity. In fact, polyconvexity of the integrand, or even quasiconvexity, is enough. However, if $\mathcal{R}$ is supposed to serve as a regularization functional, then the problem is how to measure convergence rates. The standard approach using classical Bregman distances $D_{u^*}(u_\alpha^\delta;u^\dagger)$ must be expected to fail in general due to the lack of convexity. In this article we have tried to answer two questions. (i) Are there instances of nonconvex variational regularization where standard convergence rates results \emph{do} apply? (ii) What could a general strategy for obtaining convergence rates for regularization functionals with polyconvex integrands look like?

With Example \ref{exmp:main} we have given a positive answer to the first question. It is based on the fact, explained in Remark \ref{rmk:perfect}, that source conditions are automatically satisfied, if $\mathcal{R}$ has a minimizer which is also an exact solution of the operator equation \eqref{eq:ip}. Exploiting the fact the polyconvexity is compatible with minimality on $SO(n)$ we constructed an instance of the image registration problem with nonconvex regularization where a standard convergence rates result as given in Theorem \ref{thm:convrates} applies.

The second question was addressed by introducing $W_{\!\mathrm{poly}}$-Bregman distances, which are based on a recent idea from \cite{Gra10b} and which have a reasonably large domain of definition for a certain class of functionals with polyconvex integrands (cf.~Lemma \ref{thm:wsubdif}). By adapting the usual source conditions one can obtain linear convergence rates also for the $W_{\!\mathrm{poly}}$-Bregman distances (see Theorem \ref{thm:polyconvrates}).

\subsection*{Acknowledgements}
We thank Jos\'{e} A.~Iglesias for helpful comments and discussions, in particular regarding Example \ref{exmp:main}. Both authors acknowledge support by the Austrian Science Fund (FWF) within the national research network Geometry and Simulation, project S11704 (Variational Methods for Imaging on Manifolds). In addition, the work of OS is supported by the Austrian Science Fund (FWF), project P26687-N25
Interdisciplinary Coupled Physics Imaging. 

\def\cprime{$'$}
  \providecommand{\noopsort}[1]{}\def\ocirc#1{\ifmmode\setbox0=\hbox{$#1$}\dimen0=\ht0
  \advance\dimen0 by1pt\rlap{\hbox to\wd0{\hss\raise\dimen0
  \hbox{\hskip.2em$\scriptscriptstyle\circ$}\hss}}#1\else {\accent"17 #1}\fi}

\end{document}